\newtheorem{theorem}{\sc Theorem}[section]
\newtheorem{prop}[theorem]{\sc Proposition}
\newtheorem{cor}[theorem]{\sc Corollary}
\newtheorem{rem}[theorem]{\sc Remark}
\newtheorem{ex}[theorem]{\sc Example}
\DeclareMathOperator{\tensor}{T_{\otimes}}
\newtheorem*{thmA}{Theorem A}
\newtheorem*{thmB}{Theorem B}
\newtheorem*{thmC}{Theorem C}
\title[Non-abelian tensor product]{Finiteness of homotopy groups related to the non-abelian tensor product
}
\author[Bastos]{Raimundo Bastos}
\address{ Departamento de Matem\'atica, Universidade de Bras\'ilia,
Brasilia-DF, 70910-900 Brazil }
\email{(Bastos) bastos@mat.unb.br;  (Rocco) norai@unb.br.}
\author[Rocco]{Nora\'i R. Rocco }
\author[Vieira]{Ewerton R. Vieira}
\address{ Departamento de Matem\'atica, Universidade Federal de Goi\'as,
Goi\^ania-GO, 74690-900 Brazil }
\email{(Vieira) ewerton@ufg.br}
\thanks{This work was partially supported by FAPDF - Brazil, Grant: 0193.001344/2016.}
\subjclass[2010]{20E34, 20F50, 20J06, 55P20, 55Q05}
\keywords{Finiteness conditions; Non-abelian tensor product of groups; Eilenberg-MacLane spaces; Homotopy groups}
\begin{document}
\maketitle
\begin{abstract}
By using finiteness related result of non-abelian tensor product we prove finiteness conditions for the homotopy groups $\pi_n(X)$ in terms of the number of tensors. In particular, we establish a quantitative version of the classical Blakers-Massey triad connectivity theorem. Moreover, we study others finiteness conditions and equivalence properties that arise from the non-abelian tensor square. In the end, we give applications to homotopy pushout, especially in the case of Eilenberg-MacLane spaces.
\end{abstract}
\maketitle
\section{Introduction}

In \cite{BL} Brown and Loday presented a topological significance for the non-abelian tensor product of groups. The non-abelian tensor product is used to describe the third relative homotopy group of a triad as a {\it non-abelian tensor product} of the second homotopy groups of appropriate subspaces. More specifically,  in \cite[Corollary 3.2]{BL}, the third triad homotopy group is 
$$
\pi_3(X,A,B) \cong  \pi_2(A,C)\otimes \pi_2(B,C), 
$$
where $X$ is a pointed space and $\{A, B\}$ is an open cover of $X$ such that $A, B$ and $C=A\cap B$ are connected and $(A, C), (B, C)$ are $1$-connected. 

In \cite{Ellis}, Ellis gave a finiteness criterion for the triad homotopy group in terms of the finiteness of the involved groups (see also \cite{BNRa,BNRb}). More generally, finiteness conditions to $\pi_n(X)$ when excision theorem holds is given by the finiteness of $\pi_n(A)$, $\pi_n(B)$ and $\pi_{n-1}(C)$. However, when the excision property does not hold, the failure is mesured by the triad homotopy groups $\pi_n(X,A,B)$ with $n\geq 3$. Therefore, finiteness of $\pi_n(X)$ also depends on the triad homotopy groups $\pi_n(X,A,B)$. Thus, in order to give a bound for $\pi_n(X)$, it is needed to study finiteness conditions on $\pi_n(X,A,B)$. In \cite{BL2}, another application of the non-abelian tensor product is done by Brown and Loday, where they extended the classical Blakers-Massey triad connectivity theorem, which states that if $A,B,$ and $A\cap B$ are connected, $\{A,B\}$ is an open cover of $X$, $(A,A\cap B)$ is p-connected, and $(B,A\cap B)$ is q-connected, then $\pi_{p+q+1}(X,A,B)$ is isomorphic to the non-abelian tensor product $$\pi_{p+1}(A,A\cap B)\otimes \pi_{q+1}(B,A\cap B).$$ 
The hypothesis $p,q\geq 2$ is broadened to $p,q\geq 1$, and the hypothesis $\pi_1(A\cap B)=0$ is removed. 

For the convenience of the reader we repeat the relevant definitions (cf. \cite{BNRa,BNRb,Nak}). Let $G$ and $H$ be groups each of which acts upon the other (on the right),
\[
G\times H \rightarrow G, \; (g,h) \mapsto g^h; \; \; H\times G \rightarrow
H, \; (h,g) \mapsto h^g
\]
and on itself by conjugation, in such a way that for all $g,g_1 \in G$ and
$h,h_1 \in H$,
\begin{equation*}   \label{eq:0}
g^{\left( h^{g_1} \right) } = \left( \left( g^{g^{-1}_1}  \right) ^h \right) ^{g_1} \; \; \mbox{and} \; \; h^{\left( g^{h_1}\right) } =
\left( \left( h^{h_1^{-1}} \right) ^g \right) ^{h_1}.
\end{equation*}
In this situation we say that $G$ and $H$ act {\em compatibly} on each other. Let  $H^{\varphi}$ be
an extra copy of $H$, isomorphic via $\varphi : H \rightarrow
H^{\varphi}, \; h \mapsto h^{\varphi}$, for all $h\in H$.
Consider the group $\eta(G,H)$ defined in  \cite{Nak} as
$$\begin{array}{ll} {\eta}(G,H) =  \langle
G \cup H^{\varphi}\ |  &
[g,{h}^{\varphi}]^{g_1}=[{g}^{g_1},({h}^{g_1})^{\varphi}], \;
[g,{h}^{\varphi}]^{h^{\varphi}_1} = [{g}^{h_1},
({h}^{h_1})^{\varphi}] , \\ & \ \forall g,g_1 \in G, \; h, h_1 \in H
\rangle . \end{array}$$

It is a well known fact (see \cite[Proposition 2.2]{Nak}) that the subgroup
$[G, H^{\varphi}]$ of $\eta(G,H)$ is canonically isomorphic with the {\em non-abelian
tensor product} $G \otimes H$, as defined by Brown and Loday in their seminal paper \cite{BL}, the isomorphism being induced by $g \otimes h \mapsto
[g, h^{\varphi}]$ (see also Ellis and Leonard \cite{EL}). It is clear that the subgroup $[G,H^{\varphi}]$ is normal in $\eta(G,H)$ and one has the decomposition
\begin{equation*} \label{eq:decomposition}
 \eta(G,H) = \left ( [G, H^{\varphi}] \cdot G \right ) \cdot H^{\varphi},
\end{equation*}
where the dots mean (internal) semidirect products. We observe that when $G=H$ and all actions are conjugations, $\eta (G,H)$ becomes the group $\nu (G)$ introduced in \cite{NR1}. Recall that an element $\alpha \in \eta(G,H)$ is called a {\em tensor} if $\alpha = [a,b^{\varphi}]$ for suitable $a\in G$ and $b\in H$.  We write $\tensor(G, H)$ to denote the set of all tensors (in $\eta(G,H)$). When $G = H$ and all actions are by conjugation, we simply write $\tensor(G)$ instead of $\tensor(G,G)$. A number of structural results for the non-abelian tensor product of groups (and related constructions) in terms of the set of tensors where presented in \cite{BNRa,BNRb,BR1,BR2,NR2}).  

Our contribution is to give finiteness conditions and bounds to the triad homotopy groups $\pi_n(X,A,B)$, furthermore finiteness condition and bound is given to $\pi_n(X)$.  We establish the following related results.

\begin{thmA}
Let $X$ be a union of open subspaces $A$, $B$ such that $A, B$ and $C=A\cap B$ are path-connected, and the pairs $(A,C)$ and $(B,C)$ are respectively $p$-connected and $q$-connected. Suppose that $\pi_n(A)$, $\pi_n(B)$, $\pi_{n-1}(C)$ and the set of tensors $\tensor(\pi_{p+1}(A,C),\pi_{q+1}(B,C))$ are finite, where $n=p+q+1$. Then $\pi_{n}(X)$ is a finite group with $\{a, b, c, m\}$-bounded order, where $|\pi_n(A)|=a$, $|\pi_n(B)|=b$, $|\pi_{n-1}(C)|=c$ and $m = |\tensor(\pi_{p+1}(A,C),\pi_{q+1}(B,C))|$.
\end{thmA}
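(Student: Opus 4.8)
The plan is to identify $\pi_n(X,A,B)$ with a non-abelian tensor product via the generalized Blakers--Massey theorem, to bound its order by means of the finiteness result for the tensor product, and then to transport this bound to $\pi_n(X)$ along the standard long exact sequences of pairs and of the triad. Since $p,q\geq 1$ (as required for the Blakers--Massey isomorphism), we have $n=p+q+1\geq 3$, so that all the homotopy groups occurring are genuine groups and all the connecting and inclusion-induced maps are homomorphisms; consequently orders behave submultiplicatively along exact sequences, in the sense that if $L\to M\to N$ is exact at $M$ with both maps homomorphisms then $|M|\leq |L|\cdot |N|$.

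First I would invoke the extended Blakers--Massey theorem of Brown and Loday recalled in the introduction, which yields a canonical isomorphism
\begin{equation*}
\pi_n(X,A,B)\ \cong\ \pi_{p+1}(A,C)\otimes \pi_{q+1}(B,C).
\end{equation*}
Because the set of tensors $\tensor(\pi_{p+1}(A,C),\pi_{q+1}(B,C))$ is finite of cardinality $m$, the finiteness result for the non-abelian tensor product---applied under the identification $G\otimes H\cong [G,H^{\vfi}]\leq \eta(G,H)$---shows that $\pi_{p+1}(A,C)\otimes \pi_{q+1}(B,C)$ is finite of $m$-bounded order. Hence $\pi_n(X,A,B)$ is finite, with order bounded by a function $f(m)$ depending only on $m$.

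Next I would chain three exact sequences. The homotopy sequence of the pair $(A,C)$ contains the exact portion $\pi_n(A)\to \pi_n(A,C)\to \pi_{n-1}(C)$, giving $|\pi_n(A,C)|\leq a\,c$. The long exact sequence of the triad contains $\pi_n(A,C)\to \pi_n(X,B)\to \pi_n(X,A,B)$, giving $|\pi_n(X,B)|\leq |\pi_n(A,C)|\cdot f(m)\leq a\,c\,f(m)$. Finally the homotopy sequence of the pair $(X,B)$ contains $\pi_n(B)\to \pi_n(X)\to \pi_n(X,B)$, whence
\begin{equation*}
|\pi_n(X)|\ \leq\ |\pi_n(B)|\cdot |\pi_n(X,B)|\ \leq\ a\,b\,c\,f(m),
\end{equation*}
which is the asserted $\{a,b,c,m\}$-bounded order.

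The main obstacle is the second step: turning the finiteness of the \emph{set} of tensors into an explicit bound on the order of the full tensor product $G\otimes H=[G,H^{\vfi}]$. This is exactly the finiteness result for the non-abelian tensor product (cf. \cite{BNRa,BNRb}), and it is there, rather than in the homotopy-theoretic bookkeeping, that the quantitative content resides; one must also check that the compatible pair of actions furnished by the Brown--Loday construction is precisely what is needed to form $\eta(\pi_{p+1}(A,C),\pi_{q+1}(B,C))$ and invoke that result. The remaining steps are routine once it is confirmed that the maps in the three exact sequences are homomorphisms, so that the order estimates are legitimate.
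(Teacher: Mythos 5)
Your proposal is correct and follows essentially the same route as the paper: identify $\pi_n(X,A,B)$ with $\pi_{p+1}(A,C)\otimes\pi_{q+1}(B,C)$ via the extended Blakers--Massey theorem of Brown--Loday, bound its order using the tensor-counting finiteness theorem of \cite{BNRb}, and propagate the bound through the pair and triad long exact sequences. The only cosmetic difference is that you chain through $\pi_n(A,C)$ and $\pi_n(X,B)$ while the paper uses the symmetric sequences through $\pi_n(B,C)$ and $\pi_n(X,A)$; this is just a relabeling of $A$ and $B$.
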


An application of Theorem A is that
$\pi_3( K(C_{r^\infty},2)\vee K(C_{s^\infty},2))$ is trivial, where $``\vee "$ is the wedge sum,  $r$ and $s$ are primes, and $K(G,n)$ is an  {\it Eilenberg-MacLane space} (i.e., a topological space having just one nontrivial homotopy group $\pi_n(K(G,n))\cong G$). See also Remark \ref{rem_thmA}  and Corollary \ref{cor.bounded} below. 

In \cite{BL}, Brown and Loday shown that the third homotopy group of the suspension of an Eilenberg-MacLane space $K(G,1)$ satisfies
$$
\pi_3(SK(G,1)) \cong J_2(G),
$$
where $J_2(G)$ denotes the kernel of the derived map $\kappa: [G,G^{\varphi}] \to G'$, given by  $[g,h^{\varphi}] \mapsto [g,h]$ (cf. \cite[Chapter 2 and 3]{NR2}). Many authors had studied bounds to the order of $\pi_3(SK(G,1))$ (cf. \cite{ADPT,BK,BJR,NR14}). Now, we can deduce a finiteness criterion for $\pi_3(SX)$ in terms of $\pi_2(X)$ and the number of tensors $\tensor(G)$, where $\pi_1(X) \cong G$ and $SX$ is the suspension of a space $X$ (see Remark \ref{rem.suspension}, below).

\begin{thmB}
Let $X$ be a connected space and $\pi_1(X)=G$. Suppose that the set of tensors $\tensor(G)$ has exactly $m$ tensors in $\nu(G)$ and $\pi_2(X)$ is finite with $|\pi_2(X)| = a$. Then $\pi_3(SX)$ is a finite group with $\{a, m\}$-bounded order.
\end{thmB}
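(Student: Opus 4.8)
The plan is to realize $\pi_3(SX)$ through the Blakers--Massey machinery quoted in the introduction, applied to the canonical cone decomposition of the suspension. I would write $SX = A \cup B$, where $A = SX \setminus \{v_-\}$ and $B = SX \setminus \{v_+\}$ are the complements of the two suspension vertices. Both $A$ and $B$ are open and contractible (each deformation retracts onto the opposite vertex), while $C = A \cap B$ is the open cylinder on $X$, so $C \simeq X$ is path-connected with $\pi_1(C) \cong G$. Since $A$ is contractible, the boundary map gives $\pi_i(A,C) \cong \pi_{i-1}(C) \cong \pi_{i-1}(X)$; hence $(A,C)$ is $1$-connected with $\pi_2(A,C) \cong \pi_1(X) = G$, and likewise for $(B,C)$. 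Thus $p = q = 1$, which is exactly the borderline range $p,q \geq 1$ (with $\pi_1(C)$ allowed to be nontrivial) covered by the extended Blakers--Massey theorem of Brown and Loday \cite{BL2}. It yields
$$\pi_3(SX; A, B) \cong \pi_2(A,C) \otimes \pi_2(B,C) \cong G \otimes G \cong [G, G^{\vfi}],$$
the non-abelian tensor square of $G$ with its conjugation actions.

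Next I would feed this into the triad homotopy exact sequence
$$\pi_3(A,C) \to \pi_3(SX, B) \to \pi_3(SX; A, B) \xrightarrow{\gamma} \pi_2(A,C) \xrightarrow{\delta} \pi_2(SX, B).$$
Because $A$ and $B$ are contractible, we have $\pi_3(A,C) \cong \pi_2(X)$, $\pi_3(SX,B) \cong \pi_3(SX)$, $\pi_2(A,C) \cong G$ and $\pi_2(SX,B) \cong \pi_2(SX) \cong G^{ab}$. Under the identification of the triad group with $G \otimes G$, the map $\gamma$ is the derived (commutator) map $\kappa \colon G \otimes G \to G'$, $g \otimes h \mapsto [g,h]$; this is forced by exactness at $\pi_2(A,C)$, since $\delta$ is the abelianization $G \to G^{ab}$ with kernel $G' = \operatorname{im}\kappa$. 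Hence $\ker\gamma = \ker\kappa = J_2(G)$, and the sequence collapses to
$$1 \to \operatorname{im}\bigl(\pi_2(X) \to \pi_3(SX)\bigr) \to \pi_3(SX) \to J_2(G) \to 1.$$
In particular $|\pi_3(SX)| = |\operatorname{im}(\pi_2(X) \to \pi_3(SX))| \cdot |J_2(G)| \leq a\,|J_2(G)|$, which also recovers $\pi_3(SK(G,1)) \cong J_2(G)$ in the case $\pi_2(X) = 0$, consistent with \cite{BL}.

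It then remains to bound $|J_2(G)|$. Since $J_2(G) \leq G \otimes G \cong [G, G^{\vfi}]$, it suffices to bound the order of the tensor square in terms of $m = |\tensor(G)|$, and this is precisely the BFC-type finiteness phenomenon for the non-abelian tensor product: finiteness of the set of tensors $\tensor(G)$ forces $[G, G^{\vfi}]$ to be finite of $m$-bounded order (cf. the structural results in \cite{BNRa,BNRb,BR1,BR2,NR2}). Combining the two estimates gives $|\pi_3(SX)| \leq a \cdot |G \otimes G|$, which is $\{a,m\}$-bounded, as claimed. I expect the two steps demanding the most care to be geometric rather than arithmetic: first, verifying that the cone decomposition genuinely meets the hypotheses of the extended Blakers--Massey theorem in the degenerate case $p=q=1$ with $\pi_1(C) \neq 0$ (it is exactly the removal of the hypothesis $\pi_1(A\cap B)=0$ that makes this legitimate); and second, checking that the triad boundary $\gamma$ coincides with the derived map $\kappa$, so that its kernel is $J_2(G)$ and not merely some subgroup of $G \otimes G$. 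Both should follow from the naturality of the Brown--Loday identification of the triad group $\pi_3(SX;A,B)$ with the tensor product $\pi_2(A,C)\otimes\pi_2(B,C)$.
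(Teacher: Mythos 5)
Your proof is correct and takes essentially the same route as the paper: the paper's (very terse) proof uses the exact sequence $\pi_2(X)\to\pi_3(SX)\to\pi_2(\Omega SX,X)$ together with the identification $\pi_2(\Omega SX,X)\cong[G,G^{\vfi}]$ and the $m$-bounded order of the tensor square from \cite[Theorem B]{BNRb}, which is exactly your suspension-triad/Blakers--Massey argument written in loop-space form. (Your extra care in identifying the boundary map $\gamma$ with the commutator map $\kappa$ is not even needed for the bound, since $|\ker\gamma|\le|G\otimes G|$ already suffices.)
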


It is well known that the finiteness of the non-abelian tensor square $[G,G^{\varphi}]$ does not imply the finiteness of the group $G$ (see Remark \ref{rem.prufer}(b), below). In \cite{PN}, Parvizi and Niroomand prove that if $G$ is a finitely generated subgroup and the non-abelian tensor square $[G,G^{\varphi}]$ is finite, then $G$ is finite. We obtain equivalence conditions (see the following theorem) and a topological related result (see Corollary \ref{cor.torsion}).

\begin{thmC} \label{thm.torsion}
Let $G$ be a finitely generated group. The following properties are equivalents. 
\begin{itemize}
\item[(a)] The group $G$ is finite;  
\item[(b)] The set of tensors $\tensor(G)$ is finite;
\item[(c)]  The non-abelian tensor $[G,G^{\varphi}]$ is finite; 
\item[(d)] The derived subgroup $G'$ is locally finite and the kernel $J_2(G) \cong \pi_3(SK(G,1))$ is periodic;
\item[(e)] The derived subgroup $G'$ is locally finite and the Diagonal subgroup $\Delta(G)$ is periodic;

\item[(f)] The derived subgroup $G'$ is locally finite and the  subgroup $\tilde{\Delta}(G) = \langle [g,h^{\varphi}][h,g^{\varphi}] \mid g,h \in G \rangle$ is periodic;

\item[(g)] The non-abelian tensor square $[G,G^{\varphi}]$ is locally finite.    
\end{itemize}
\end{thmC}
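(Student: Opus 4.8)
The plan is to prove the single chain (a) $\Rightarrow$ (b) $\Rightarrow$ (c) $\Rightarrow$ (a), which settles the equivalence of the first three conditions, and then to show that (a) implies each of (d)--(g) while each of (d)--(g) in turn implies (a), so that all seven properties collapse to the finiteness of $G$. For (a) $\Rightarrow$ (b) I would use that the finiteness of $G$ forces $\nu(G)$ (hence its subset $\tensor(G)$ and its subgroup $[G,G^{\varphi}]$) to be finite; in fact (a) trivially implies all of (b)--(g), since once $G$ is finite the subgroups $G'$, $J_2(G)$, $\Delta(G)$, $\tilde{\Delta}(G)$ and $[G,G^{\varphi}]$ are finite, hence locally finite and periodic. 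For (b) $\Rightarrow$ (c) I would invoke the Neumann-type finiteness result for the non-abelian tensor product (finiteness of the set of tensors yields finiteness of $[G,G^{\varphi}]$), available among the structural results quoted in the introduction. Finally (c) $\Rightarrow$ (a) is exactly the theorem of Parvizi and Niroomand \cite{PN} for finitely generated $G$. This closes the core cycle.

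The substance of the theorem lies in the implications (d),(e),(f),(g) $\Rightarrow$ (a), and here I would isolate one lemma. Using functoriality of the non-abelian tensor product, the abelianization map $G \to G^{\mathrm{ab}}=G/G'$ induces a surjection $[G,G^{\varphi}] = G\otimes G \to G^{\mathrm{ab}}\otimes G^{\mathrm{ab}}$ (ordinary tensor product over $\mathbb{Z}$) sending $[g,h^{\varphi}]\mapsto \bar g\otimes\bar h$; one checks this respects the defining relations because the induced action on $G^{\mathrm{ab}}$ is trivial. If $G$ is finitely generated and $G^{\mathrm{ab}}$ is infinite, it has a free summand, giving a further surjection $G^{\mathrm{ab}}\otimes G^{\mathrm{ab}}\to \mathbb{Z}\otimes\mathbb{Z}\cong\mathbb{Z}$ onto which some $g_0\in G$ maps to the generator $1$. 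Under the composite the diagonal generator $[g_0,g_0^{\varphi}]$ maps to $1^2=1$ and the symmetric generator $[g_0,g_0^{\varphi}][g_0,g_0^{\varphi}]$ maps to $2$, so $\Delta(G)$ surjects onto $\mathbb{Z}$ and $\tilde{\Delta}(G)$ onto $2\mathbb{Z}$; since $\Delta(G)\subseteq J_2(G)$ and both lie in $[G,G^{\varphi}]$, each of these four subgroups surjects onto a torsion-free infinite group. The contrapositive is the key lemma: if any one of $\Delta(G)$, $\tilde{\Delta}(G)$, $J_2(G)$ is periodic, or if $[G,G^{\varphi}]$ is locally finite, then $G^{\mathrm{ab}}$ is finite.

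With this lemma the return implications are short. For (g), local finiteness of $[G,G^{\varphi}]$ gives both that $G' \cong [G,G^{\varphi}]/J_2(G)$ is locally finite and, by the lemma, that $G^{\mathrm{ab}}$ is finite; hence $G$ is a finitely generated extension of a locally finite group by a finite one, so $G$ is locally finite and therefore finite. For (d), (e), (f) the hypothesis already supplies ``$G'$ locally finite'', and since $J_2(G)\supseteq\Delta(G)$, periodicity of $J_2(G)$ forces periodicity of $\Delta(G)$; in all three cases the lemma yields $G^{\mathrm{ab}}$ finite, and the same extension-plus-finite-generation argument makes $G$ finite. Note that (e) and (f) are handled independently of each other, so no comparison of $\Delta(G)$ and $\tilde{\Delta}(G)$ is required, although their differing only by $2$-torsion would make them interchangeable.

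I expect the main obstacle to be the careful set-up of the induced map on tensor squares and the verification that $\Delta(G)$ and $\tilde{\Delta}(G)$ map onto infinite subgroups of $\mathbb{Z}$: this is the step where the defining relations and functoriality of $\otimes$ must be used precisely, and it is what lets one bypass any appeal to the Schur multiplier in comparing $J_2(G)$, $\Delta(G)$ and $\tilde{\Delta}(G)$. The two external inputs---the Neumann-type finiteness result for the set of tensors and the Parvizi--Niroomand theorem---are quoted rather than reproved, so the only genuinely new work is this lemma together with the elementary bookkeeping that upgrades local finiteness to finiteness under finite generation.
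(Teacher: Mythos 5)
Your proposal is correct, and the key idea coincides with the paper's: detect an element of infinite order in $\Delta(G)$ (resp.\ $\tilde{\Delta}(G)$, $J_2(G)$, $[G,G^{\varphi}]$) coming from the free part of the finitely generated abelianization, conclude $G^{ab}$ is finite, and then upgrade to finiteness of $G$ using Schmidt's theorem and finite generation. The organization differs, though. The paper proves the single cycle $(a)\Rightarrow(b)\Rightarrow\cdots\Rightarrow(g)\Rightarrow(a)$, dismissing $(c)\Rightarrow(d)\Rightarrow(e)\Rightarrow(f)$ as immediate (note that $(e)\Rightarrow(f)$ silently uses $\tilde{\Delta}(G)\leqslant\Delta(G)$), and it runs the ``free part'' argument twice: once in $(f)\Rightarrow(g)$ by asserting that $[x,x^{\varphi}]^2$ has infinite order, and once in $(g)\Rightarrow(a)$ by citing the decomposition $\Delta(G^{ab})\cong\Delta(T)\times\Delta(F)\times(T\otimes_{\mathbb{Z}}F)$ from Rocco's work. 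You instead close the sub-cycle $(a)\Leftrightarrow(b)\Leftrightarrow(c)$ (using Parvizi--Niroomand for $(c)\Rightarrow(a)$, where the paper gets this for free from the long chain) and treat $(d)$--$(g)$ hub-and-spoke via a single lemma: the induced surjection $[G,G^{\varphi}]\twoheadrightarrow G^{ab}\otimes_{\mathbb{Z}}G^{ab}\twoheadrightarrow\mathbb{Z}$ sends $[g_0,g_0^{\varphi}]$ to $1$ and $[g_0,g_0^{\varphi}]^2$ to $2$. This buys two things: it supplies an explicit justification for the paper's unproved assertion that $[x,x^{\varphi}]^2$ has infinite order in $\tilde{\Delta}(G)$, and it makes $(e)$ and $(f)$ independent so that no containment between $\Delta(G)$ and $\tilde{\Delta}(G)$ is needed. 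The cost is an extra external input (the Parvizi--Niroomand theorem), which is avoidable since your lemma also yields $(c)\Rightarrow(a)$ directly via $(g)$.
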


In algebraic topology, non-abelian tensor product arises from a homotopy pushout, see \cite{BL, BHS}. The  homotopy pushout (or homotopy amalgameted sum) is well known for the application in the classical Seifert-van Kampen theorem as well as Higher Homotopy Seifert-van Kampen theorem in the case of a covering by two open sets. Our contribution is to apply the idea of Theorem A and related construction on tensor product in order to obtain finiteness results related to homotopy pushout. For instance, see Proposition \ref{prop_GMN} for an application to homotopy pushout of Eilenberg-Maclane spaces.

The paper is organized as follows. In the next section we describe finiteness criteria for the group $\pi_n(X)$ in terms of the number of tensors. In particular, we establish a quantitative version of the classical Blakers-Massey triad connectivity theorem. In the third section we examine some finite necessary conditions for the group $G$ in terms of certain torsion elements of the non-abelian tensor square $[G,G^{\varphi}]$. In the final section, as an application we obtain finiteness criteria for the homotopy pushout that depends on the number of tensor of non-abelian tensor product of groups.  

\section{Finiteness Conditions}

Let $(X,A,B)$ be a triad, that is, $A$ and $B$ are subspaces of $X$, containing the base-point in $C=A\cap B$, such that the triad homotopy group $\pi_n(X,A,B)$ for $n\geq 3$ fit into a long exact sequence 
$$
\cdots \rightarrow \pi_n(B,C)\rightarrow \pi_n(X,A)\rightarrow \pi_n(X,A,B) \rightarrow \pi_{n-1}(B,C)\rightarrow\cdots.
$$
Let $X$ be a pointed space and $\{A, B\}$ an open cover of $X$ such that $A, B$ and $C=A\cap B$ are connected and $(A, C), (B, C)$ are 1-connected, see \cite{Wi}. 

By using the relative homotopy long exact sequences and the third triad homotopy group, Ellis and McDermott obtain interesting bound to the order of $\pi_3(X)$ (cf. \cite[Proposition 5]{EM}). We have obtained (as a consequence of Theorem A) a more general version. In contrast to their bound, we do not require that $\pi_2(A,C)$ and $\pi_2(B,C)$  be finite groups and also there is no need to estimate $\pi_2(X,C)$. 

Following the same setting, we apply the extended Blakers-Massey triad connectivity \cite[Theorem 4.2]{BL2}
$$\pi_{p+q+1}(X,A,B)\cong \pi_{p+1}(A,A\cap B)\otimes \pi_{q+1}(B,A\cap B),$$ in order to present finiteness condition and bound to $\pi_{n}(X)$ in terms of set of tensors  
$\tensor(\pi_{p+1}(A,C),\pi_{q+1}(B,C)),$ where $n=p+q+1$.

\begin{proof}[Proof of Theorem A]
Consider the relative homotopy long exact sequences, as in \cite{Wi},
$$\pi_n(B)\rightarrow \pi_n(B,C)\rightarrow \pi_{n-1}(C)$$
$$\pi_n(A)\rightarrow \pi_n(X)\rightarrow \pi_n(X,A)$$
$$\pi_n(B,C)\rightarrow \pi_n(X,A)\rightarrow \pi_n(X,A,B).$$

By the first exact sequence, we deduce that $\pi_n(B,C)$ is a finite group with $\{b,c\}$-bounded order. According to Brown-Loday's result \cite[Theorem 4.2]{BL2}, the group $\pi_n(X,A,B)$ is isomorphic to the non-abelian tensor product $M \otimes N$, where $M = \pi_{p+1}(A,C)$ and $N = \pi_{q+1}(B,C)$. As $|T_{\otimes}(\pi_{p+1}(A,C),\pi_{q+1}(B,C))|=m$, we have $\pi_n(X,A,B)$ is finite with $m$-bounded order (\cite[Theorem B]{BNRb}). From this we conclude that the group $\pi_n(X,A)$ is finite with $\{b,m\}$-bounded order. In the same manner we can see that $\pi_n(X)$ is finite with $\{a,b,c,m\}$-bounded order. The proof is complete.     
\end{proof}

\begin{rem}\label{rem_thmA}
A direct application of Theorem A is that 
$\pi_3(K(G,2)\vee K(H,2))$ is a finite group with $m$-bounded order, when $|\tensor (G,H)|=m$. In particular, 
$\pi_3(K(C_{r^\infty},2)\vee K(C_{s^\infty},2))$ is trivial, where $s$ and $r$ are primes and $K(G,n)$ is Eilenberg-MacLane space (a topological space having just one nontrivial homotopy group $\pi_n(K(G,n))\cong G$).
\end{rem}

By using the same idea of the previous remark we have the following corollary.

\begin{cor} \label{cor.bounded}
Let $A$ and $B$ be $p$-connected and $q$-connected locally contractible spaces, respectively. Suppose that $|\pi_n(A)|=a$, $|\pi_n(B)|=b$ and $|\tensor(\pi_{p+1}(A),\pi_{q+1}(B))|=m$, where $n=p+q+1$. Then $\pi_n(A\vee B)$ is finite with  $\{a,b,m\}$-bounded order.
\end{cor}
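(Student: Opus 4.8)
The plan is to reduce to Theorem~A by realizing $A\vee B$ as a union of two open subspaces whose intersection is contractible, thus carrying out in general the construction indicated in Remark~\ref{rem_thmA}. Let $*$ be the wedge point. Using local contractibility, I would choose a contractible open neighbourhood $U$ of $*$ in $A$ and a contractible open neighbourhood $V$ of $*$ in $B$. Working inside $X=A\vee B$, set $A'=A\cup V$ and $B'=U\cup B$. A short check with the wedge topology shows that $A'$ and $B'$ are open (the complement of $A'$ is the closed set $B\setminus V$, and similarly for $B'$), that $A'\cup B'=A\cup B=X$, and that $C:=A'\cap B'=U\vee V$ is a wedge of two contractible spaces, hence contractible. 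Moreover, retracting $V$ to $*$ while fixing $A$ gives a deformation retraction of $A'$ onto $A$, so $A\hookrightarrow A'$ is a homotopy equivalence; likewise $B\hookrightarrow B'$.

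Next I would verify the hypotheses of Theorem~A for the triad $(X,A',B')$. Since $C$ is contractible, $\pi_{n-1}(C)=0$, so one may take $c=1$. The long exact sequence of the pair, together with $\pi_k(C)=0$ for all $k$ and the homotopy equivalences above, yields natural isomorphisms $\pi_k(A',C)\cong\pi_k(A')\cong\pi_k(A)$ and $\pi_k(B',C)\cong\pi_k(B)$. As $A$ is $p$-connected and $B$ is $q$-connected, it follows that $(A',C)$ is $p$-connected and $(B',C)$ is $q$-connected, and that $\pi_{p+1}(A',C)\cong\pi_{p+1}(A)$ and $\pi_{q+1}(B',C)\cong\pi_{q+1}(B)$. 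Consequently the set $\tensor(\pi_{p+1}(A',C),\pi_{q+1}(B',C))$ is identified with $\tensor(\pi_{p+1}(A),\pi_{q+1}(B))$, of cardinality $m$, while $|\pi_n(A')|=a$ and $|\pi_n(B')|=b$.

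Applying Theorem~A to $(X,A',B')$ then shows that $\pi_n(A\vee B)=\pi_n(X)$ is finite with $\{a,b,c,m\}$-bounded order; since $c=1$ this is an $\{a,b,m\}$-bound, as desired. The only delicate point is the geometric construction of the cover: one must ensure the thickenings $A'$ and $B'$ are genuinely open in $A\vee B$ and that $C$ is contractible, which is precisely where local contractibility enters. Once that is in place, the identifications of homotopy groups and tensors are routine consequences of the long exact sequence of a pair and the homotopy equivalences $A\simeq A'$, $B\simeq B'$, and the result drops out of Theorem~A.
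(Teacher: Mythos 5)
Your proposal is correct and is exactly the argument the paper intends: the paper proves this corollary only by appeal to ``the same idea as Remark \ref{rem_thmA}'', i.e.\ thickening $A$ and $B$ inside the wedge to an open cover with (weakly) contractible intersection and invoking Theorem A, which is precisely what you carry out in detail. Your version is more careful about the point-set issues (openness of the thickenings, contractibility of $U\vee V$, the deformation retractions), which the paper suppresses entirely.
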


The previous corollary is an example that we can use the non-abelian tensor product to overcome the failure of the excision property, which reflects in $\pi_n(A\vee B)$ being different of $\pi_n(A)\oplus \pi_n(B)$ in general, where $n\geq 2$.

The following result provide a finiteness criterion to the group $G$ in terms of the number of tensors in the non-abelian tensor square $[G,G^{\varphi}]$.

\begin{cor} \label{cor.quantitative}
Let $X$ be connected space and $\pi_1(X)=G$. Suppose that the first homology group of $X$, $H_1(X,\mathbb{Z})$, is finitely generated and the set of tensors $\tensor(G) \subseteq \nu(G)$ has exactly $m$ tensors. Then $H_1(X,\mathbb{Z})$ and $\pi_1(X)=G$ are finite with $m$-bounded orders. 
\end{cor}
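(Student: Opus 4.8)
The plan is to combine the Hurewicz theorem with the quantitative finiteness of the non-abelian tensor square. Since $X$ is connected with $\pi_1(X)=G$, the Hurewicz theorem in degree one identifies $H_1(X,\mathbb{Z})\cong G/G'$, the abelianization of $G$, so the hypothesis says precisely that $G/G'$ is a finitely generated abelian group. Because $\tensor(G)\subseteq\nu(G)$ has exactly $m$ elements, the non-abelian tensor square $[G,G^{\varphi}]\cong G\otimes G$ is finite with $m$-bounded order, by \cite[Theorem B]{BNRb} (the same input used in the proof of Theorem A). Every subsequent bound will be read off from this one finite group by pushing it down along two natural epimorphisms.

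First I would use the derived map $\kappa\colon[G,G^{\varphi}]\to G'$, $[g,h^{\varphi}]\mapsto[g,h]$, which is surjective; hence $G'$ is finite with $m$-bounded order. Next I would exploit the natural epimorphism $\rho\colon[G,G^{\varphi}]=G\otimes G\twoheadrightarrow (G/G')\otimes_{\mathbb{Z}}(G/G')$ induced by abelianization, under which a tensor $[g,h^{\varphi}]$ maps to the simple tensor $\bar g\otimes\bar h$. That $\rho$ respects the defining relations of $G\otimes G$ is immediate, since conjugate elements have equal image in $G/G'$, so the action terms become trivial and the relations collapse to bilinearity. Consequently the ordinary tensor square $(G/G')\otimes_{\mathbb{Z}}(G/G')$ is finite with $m$-bounded order.

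This is the step where finite generation enters, and it is the crux. Writing $A=G/G'=\mathbb{Z}^{r}\oplus T$ with $T$ finite, the tensor square $A\otimes_{\mathbb{Z}}A$ contains $\mathbb{Z}^{r}\otimes_{\mathbb{Z}}\mathbb{Z}^{r}\cong\mathbb{Z}^{r^{2}}$; finiteness of $A\otimes_{\mathbb{Z}}A$ therefore forces $r=0$, so $A=H_1(X,\mathbb{Z})$ is finite. A short primary-decomposition computation then gives $|A|\le|A\otimes_{\mathbb{Z}}A|$ for every finite abelian group $A$, whence $H_1(X,\mathbb{Z})=G/G'$ is finite with $m$-bounded order. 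This is exactly the point that fails for the Pr\"ufer groups (cf. Remark \ref{rem.prufer}): there the tensor square is finite, but $G/G'$ is not finitely generated, so no finiteness of $G$ can be inferred.

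Finally I would assemble the extension $1\to G'\to G\to G/G'\to 1$: as both $G'$ and $G/G'$ are finite with $m$-bounded order, so is $G=\pi_1(X)$, completing the proof. I expect the genuine obstacle to be the \emph{quantitative} control of $G/G'$ rather than its mere finiteness: once $G'$ is seen to be finite, $G$ is finitely generated and its finiteness already follows qualitatively from the equivalence $(a)\Leftrightarrow(b)$ of Theorem C; it is the route through $\rho$ that upgrades this to the explicit $m$-bound demanded in the statement.
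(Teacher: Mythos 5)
Your proof is correct, and it reaches the conclusion by a genuinely different route at the one step that matters. Both you and the paper begin identically: the $m$ tensors force $[G,G^{\varphi}]$ to be finite of $m$-bounded order (via \cite[Theorem B]{BNRb}), and the surjection $\kappa\colon [G,G^{\varphi}]\to G'$ then bounds $G'$. The divergence is in how the finitely generated abelianization $G^{ab}\cong H_1(X,\mathbb{Z})$ gets bounded. The paper goes \emph{upward}: it invokes \cite[Theorem C (a)]{BNRa}, which embeds a finitely generated $G^{ab}$ as a subgroup of $[G,G^{\varphi}]$ (essentially inside the diagonal subgroup $\Delta(G)$), so the bound on $|[G,G^{\varphi}]|$ transfers to $|G^{ab}|$ immediately. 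You go \emph{downward}: you push $[G,G^{\varphi}]$ onto the ordinary tensor square $G^{ab}\otimes_{\mathbb{Z}}G^{ab}$ via the epimorphism induced by abelianization (which is legitimate -- the non-abelian tensor square is functorial for surjections, and for an abelian group with trivial conjugation action it reduces to the usual $\mathbb{Z}$-module tensor product), then use the decomposition $G^{ab}\cong\mathbb{Z}^{r}\oplus T$ to kill the free rank and the elementary inequality $|A|\le|A\otimes_{\mathbb{Z}}A|$ for finite abelian $A$ (clear from the diagonal blocks $\mathbb{Z}/\gcd(n_i,n_i)$ in the primary decomposition) to get the same $m$-bound. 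Your argument is longer but self-contained, replacing a citation with an explicit computation, and it makes transparent exactly where finite generation is used -- which you correctly tie to the Pr\"ufer-group counterexample of Remark \ref{rem.prufer}. The paper's argument is shorter but leans on the external embedding theorem. Both yield the stated $m$-bounded orders for $H_1(X,\mathbb{Z})$ and $G$.
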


\begin{proof} Recall that $X$ is a connected space and $\pi_1(X)=G$. Suppose that the first homology of $X$, $H_1(X,\mathbb{Z})$, is finitely generated and the set of tensors $\tensor(G) \subseteq \nu(G)$ has exactly $m$ tensors. We need to prove that $H_1(X,\mathbb{Z})$ and $\pi_1(X)=G$ are finite with $m$-bounded order.  

By Theorem A, the non-abelian tensor square $[G,G^{\varphi}]$ is finite with $m$-bounded order. Consequently, the derived subgroup $G'$ is finite with $m$-bounded order. Since $G^{ab}$ is finitely generated, it follows that the abelianization $G^{ab}$ is isomorphic to an subgroup of the non-abelian tensor square $[G,G^{\varphi}]$ (\cite[Theorem C (a)]{BNRa}). Therefore the abelianization $G^{ab}$ is finite with $m$-bounded order. Consequently, $G$ is finite with $m$-bounded order. The proof is complete.  
\end{proof}

 In the case of the suspension triad $(SX;C_+ X, C_-X)$, see \cite{BL, Wi}, we can obtained finiteness criteria and bounds to the order of $\pi_3(SX)$, where $C_-X$ and $C_+X$ are the two cones of $X$ in $SX$.

\begin{proof}[Proof of Theorem B]
Consider the long exact sequence
$$\cdots\rightarrow\pi_2(X)\rightarrow \pi_3(SX)\rightarrow \pi_2(\Omega SX,X)\rightarrow \cdots$$
where $\Omega SX$ is the space of loops in $SX$, maps from the circle $S^1$ to $SX$, equipped with the compact--open topology.
The result follows by applying Theorem A on $ [G,G^{\varphi}] \cong \pi_2(\Omega SX,X)$.
\end{proof}

\begin{rem} \label{rem.suspension}
In the above result, it is worth noting that $\pi_2(X)$ does not need to be trivial, therefore the result is more general compared to the bounds for $\pi_3(SK(G,1))$ when $X=K(G,1)$. However, in \cite{BL}, proved that if $\pi_1(X) =G$ and $\pi_2(X)$ is trivial, then $\pi_3(SX)\cong J_2(G) = \ker(\kappa)$, where $\kappa: [G,G^{\varphi}] \to G'$, given by $[g,h^{\varphi}] \mapsto [g,h]$.
\end{rem}

Combining the above bounds to the order of the non-abelian tensor square and \cite[Proposition 4.10]{BL} we can obtained,  under appropriate conditions in the set of tensors $\tensor(G)$, some finiteness criteria and bounds to the orders of $\pi_3(SK(G,1))$ and $\pi_2^S(K(G,1))$ the second stable homotopy of Eilenberg-MacLane space.

\begin{cor}\label{cor_SKpi}
Let $G$ be a group. Suppose that the set $\tensor(G)$ has exactly $m$ tensors in $\nu(G)$. Then
\begin{itemize}
\item[(a)] The second stable homotopy group $\pi_2^{S}(K(G,1))$ is finite with $m$-bounded order; 
\item[(b)] $\pi_3(SK(G,1))$ is finite with $m$-bounded order. 
\end{itemize}
\end{cor}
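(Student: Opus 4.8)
The plan is to first reduce everything to a single bound on the non-abelian tensor square $[G,G^{\varphi}]$ and then transport that bound across the topological identifications supplied by \cite[Proposition 4.10]{BL}. By hypothesis the set of tensors $\tensor(G)$ has exactly $m$ elements, so \cite[Theorem B]{BNRb} immediately gives that $[G,G^{\varphi}]$ is finite with $m$-bounded order. Both homotopy groups in the statement will be realized as a subgroup or a quotient of this group, and the conclusion will then be purely order-theoretic, since subgroups and quotients of a finite group of $m$-bounded order are again finite of $m$-bounded order.

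For part (b) I would argue as follows. Since $K(G,1)$ is an Eilenberg--MacLane space we have $\pi_2(K(G,1))=0$, so Remark \ref{rem.suspension} yields the isomorphism $\pi_3(SK(G,1))\cong J_2(G)=\ker\kappa$, where $\kappa\colon [G,G^{\varphi}]\to G'$. Thus $\pi_3(SK(G,1))$ is isomorphic to a subgroup of $[G,G^{\varphi}]$ and is therefore finite with $m$-bounded order. Equivalently one may invoke Theorem B directly with $X=K(G,1)$ and $a=|\pi_2(K(G,1))|=1$, which produces the same bound with no extra work.

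For part (a) the second stable homotopy group $\pi_2^{S}(K(G,1))$ must be compared with the tensor square. Here I would use \cite[Proposition 4.10]{BL} to relate $\pi_2^{S}(K(G,1))$ to $[G,G^{\varphi}]$; concretely, I expect to realize it as an epimorphic image of the group handled in part (b) via the stabilization map $\pi_3(SK(G,1))\to\pi_2^{S}(K(G,1))$. This is consistent with the Freudenthal suspension theorem applied to the $1$-connected space $SK(G,1)$: suspension is an epimorphism in degree $k=3$ and an isomorphism thereafter, so $\pi_2^{S}(K(G,1))$ is a quotient of $\pi_3(SK(G,1))$. Being a quotient of a finite group of $m$-bounded order, it is itself finite with $m$-bounded order.

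The main obstacle I anticipate is making the identification in part (a) precise, that is, extracting from \cite[Proposition 4.10]{BL} the exact statement that $\pi_2^{S}(K(G,1))$ is a quotient (or subgroup) of the tensor square, rather than merely an abstractly finite group. Once the correct epimorphism onto a group of known $m$-bounded order is in hand, both bounds follow by counting and require no further computation; the unstable part (b) carries no real difficulty beyond the already-established finiteness of $[G,G^{\varphi}]$.
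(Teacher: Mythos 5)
Your proposal is correct and follows essentially the same route the paper intends: the hypothesis $|\tensor(G)|=m$ gives an $m$-bounded order for $[G,G^{\varphi}]$ via \cite[Theorem B]{BNRb}, part (b) follows since $\pi_3(SK(G,1))\cong J_2(G)$ is a subgroup of $[G,G^{\varphi}]$, and part (a) follows because \cite[Proposition 4.10]{BL} (equivalently, the exact sequence $1\to\tilde{\Delta}(G)\to J_2(G)\to \pi_2^S(K(G,1))\to 1$ used elsewhere in the paper, together with Freudenthal) exhibits $\pi_2^{S}(K(G,1))$ as a quotient of $\pi_3(SK(G,1))$. The paper gives no written proof beyond this sketch, and your argument fills it in accurately.
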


A particular case of the previous corollary is when $G$ is a Pr\"ufer group $C_{p^\infty}$ so $\pi_2^{S}(K(G,1))$ and $\pi_3(SK(G,1))$ are trivial groups.

Next we give a sufficient and necessary conditions to the suspension of Eilenberg-Maclane space $K(G,1)$  be a finite group. 

\begin{prop}
Let G be a BFC-group such that $G^{ab}$ is finitely generated. Then, $\pi_3(SK(G,1))$ is finite if and only if $\pi_1(K(G,1))$ is finite.
\end{prop}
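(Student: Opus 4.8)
The plan is to translate the topological statement into pure group theory via the identifications $\pi_1(K(G,1)) \cong G$ and $\pi_3(SK(G,1)) \cong J_2(G)$, the latter being the kernel of the derived map $\kappa : [G,G^{\varphi}] \to G'$, $[g,h^{\varphi}] \mapsto [g,h]$, recalled in Remark \ref{rem.suspension} (here $\pi_2(K(G,1))=0$, so the hypothesis of that remark is satisfied). Under these identifications the proposition asserts that, for a BFC-group $G$ whose abelianization $G^{ab}$ is finitely generated, the kernel $J_2(G)$ is finite if and only if $G$ is finite.

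I would dispose of the easy implication first: if $G$ is finite then $\nu(G)$ is finite, hence so are the tensor square $[G,G^{\varphi}] \le \nu(G)$ and its subgroup $J_2(G)$.

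For the converse I would proceed in three steps. First, invoke B.\,H.\ Neumann's theorem that a group is a BFC-group exactly when its derived subgroup is finite; the BFC hypothesis then yields $|G'| < \infty$. Second, use the short exact sequence
$$ 1 \longrightarrow J_2(G) \longrightarrow [G,G^{\varphi}] \stackrel{\kappa}{\longrightarrow} G' \longrightarrow 1, $$
which presents the tensor square as an extension of the finite group $G'$ by the finite kernel $J_2(G)$; consequently $[G,G^{\varphi}]$ is finite. Third, apply \cite[Theorem C (a)]{BNRa}: since $G^{ab}$ is finitely generated it embeds into $[G,G^{\varphi}]$, so $G^{ab}$ is finite as well. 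Finally, from $G/G' \cong G^{ab}$ and the finiteness of both $G'$ and $G^{ab}$ I would conclude $|G| = |G'|\,|G^{ab}| < \infty$.

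I expect no serious obstacle: the only point needing verification is the exactness of the displayed sequence at $G'$, i.e.\ the surjectivity of $\kappa$, which holds because $[G,G^{\varphi}]$ is generated by the tensors $[g,h^{\varphi}]$ whose images $[g,h]$ generate $G'$. The real content of the statement is simply the interplay between Neumann's theorem (controlling $G'$ through the BFC condition) and the embedding $G^{ab} \hookrightarrow [G,G^{\varphi}]$ (controlling $G^{ab}$ through the finiteness of the tensor square), which together pin down both the derived subgroup and the abelianization, and hence $G$ itself.
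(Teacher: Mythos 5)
Your proposal is correct and follows essentially the same route as the paper's own proof: both directions use the identification $\pi_3(SK(G,1))\cong J_2(G)$, Neumann's theorem to get $G'$ finite from the BFC hypothesis, the short exact sequence $1\to J_2(G)\to [G,G^{\varphi}]\to G'\to 1$ to conclude that the tensor square is finite, and the embedding of the finitely generated $G^{ab}$ into $[G,G^{\varphi}]$ from \cite[Theorem C (a)]{BNRa} to finish. The only cosmetic difference is that the paper additionally remarks that $G^{ab}$ lands inside the diagonal subgroup $\Delta(G)$, which is not needed for the conclusion.
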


\begin{proof}
Assume that $J_2(G) \cong \pi_3(SK(G,1))$ is finite. We have the following short exact sequence $$0\rightarrow J_2(G) \rightarrow [G,G^{\varphi}] \rightarrow G'\rightarrow 1,$$ since $G'$ is finite by Neumann's Theorem \cite[14.5.11]{Rob}, we have that the non-abelian tensor square $[G,G^{\varphi}]$ is finite. Since $G^{ab}$ is finitely generated, it follows that the abelianization $G^{ab}$ is isomorphic to a subgroup of the non-abelian tensor square $[G,G^{\varphi}]$ (\cite[Theorem C (a)]{BNRa}). Moreover, we can deduce that the abelianization $G^{ab}$ is a subgroup of the diagonal subgroup $\Delta(G)$. From this we deduce that the abelianization $G^{ab}$ is finite and so, $G$ is finite.

Conversely, suppose that $\pi_1(K(G,1))$ is finite. Consequently, the non-abelian tensor square $[G,G^{\varphi}]$ is finite and so, $\pi_3(SK(G,1))$ is finite. The proof is complete. 
\end{proof}

\begin{rem} \label{rem.prufer}
(a) \ Assume that $G$ is a BFC-group and the abeliani\-zation $G^{ab}$ is finitely generated. Consider the following short exact sequence (cf. \cite[Section 2]{NR2}), 
$$
1 \to \Delta(G) \to J_2(G) \to H_2(G)\to 1,
$$
where $J_2(G)$ is isomorphic to $\pi_3(SK(G,1))$ and $H_2(G)$ is the second homology group. Recall that the Diagonal subgroup $\Delta(G)$ is given by $\Delta(G) = \langle [g,g^{\varphi}] \mid g \in G\rangle$. The subgroup $\Delta(G)$ is finite if and only if $\pi_1(K(G,1))$ is finite (if and only if the group $\pi_3(SK(G,1))$ is finite). See \cite[Section 2]{BL} for more details. \\

\noindent (b) Note that the hypothesis of the finitely generated abelianization is really needed. For instance, the Prufer group  $G=C_{p^{\infty}}$ is an infinite group such that the non-abelian tensor square  $[G,G^{\varphi}]$ is trivial and so, finite. In particular, $J_2(G) \cong \pi_3(SK(G,1))$ is also trivial.  
\end{rem}
	
A direct application of Corollary $\ref{cor.quantitative}$ and Corollary $\ref{cor_SKpi}$ for the suspension of Eilenberg-Maclane space $K(G,1)$ and the second stable homotopy of $K(G,1)$ is the following finiteness condition.
	
\begin{cor} Let $G$ be a group. Then, $\pi_3(SK(G,1))$ and $G'$ are finite if and only if $\tensor(G)$ is finite. Moreover, suppose that $G$ is perfect then, $\pi_2^S(K(G,1))$ and $G$ are finite if and only if $\tensor(G)$ is finite.
\end{cor}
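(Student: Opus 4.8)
The plan is to reduce both equivalences to the single observation that the finiteness of the set of tensors $\tensor(G)$ is equivalent to the finiteness of the non-abelian tensor square $[G,G^{\varphi}]$. One direction is immediate, since $\tensor(G)\subseteq [G,G^{\varphi}]$; the reverse implication is precisely \cite[Theorem B]{BNRb}, which bounds $|[G,G^{\varphi}]|$ in terms of $m=|\tensor(G)|$. With this equivalence available, the corollary follows by inspecting the short exact sequence
$$0 \rightarrow J_2(G) \rightarrow [G,G^{\varphi}] \rightarrow G' \rightarrow 1,$$
in which the right-hand map is the surjection $\kappa$ (its image is generated by all $[g,h]$) and $J_2(G)=\ker\kappa\cong \pi_3(SK(G,1))$.

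For the first statement I would argue in both directions from this sequence. If $\tensor(G)$ is finite, then $[G,G^{\varphi}]$ is finite, hence so are its subgroup $J_2(G)\cong \pi_3(SK(G,1))$ and its quotient $G'=\kappa([G,G^{\varphi}])$. Conversely, if $\pi_3(SK(G,1))\cong J_2(G)$ and $G'$ are both finite, then $[G,G^{\varphi}]$ is an extension of the finite group $G'$ by the finite group $J_2(G)$ and is therefore finite, whence $\tensor(G)\subseteq [G,G^{\varphi}]$ is finite. This establishes the asserted equivalence.

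For the second statement, assume $G$ is perfect, so that $G=G'$. If $\tensor(G)$ is finite, then $[G,G^{\varphi}]$ is finite, so $G=G'=\kappa([G,G^{\varphi}])$ is finite, and $\pi_2^S(K(G,1))$ is finite by Corollary \ref{cor_SKpi}(a). Conversely, if $G$ is finite then $[G,G^{\varphi}]\cong G\otimes G$ is finite, so $\tensor(G)$ is finite; note that for this direction the finiteness of $\pi_2^S(K(G,1))$ is not needed. The only genuinely nontrivial ingredient is the implication $|\tensor(G)|<\infty \Rightarrow |[G,G^{\varphi}]|<\infty$ furnished by \cite[Theorem B]{BNRb}; granting it, every remaining step is formal, using only that subgroups and quotients of finite groups are finite and that an extension of a finite group by a finite group is finite.
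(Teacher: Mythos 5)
Your proof is correct and follows essentially the same route the paper intends: the paper states this corollary without a written proof, describing it as a direct application of Corollary \ref{cor.quantitative} and Corollary \ref{cor_SKpi}, both of which rest on exactly the ingredients you use, namely the bound $|\tensor(G)|<\infty \Rightarrow |[G,G^{\varphi}]|<\infty$ from \cite[Theorem B]{BNRb} and the Brown--Loday exact sequence $1\to J_2(G)\to [G,G^{\varphi}]\to G'\to 1$ with $J_2(G)\cong\pi_3(SK(G,1))$. Your unpacking of the converse directions via the extension argument and, in the perfect case, via $G=G'=\kappa([G,G^{\varphi}])$ is exactly what the paper's cited corollaries reduce to.
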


\section{Torsion elements in the non-abelian tensor square}

This section is will be devoted to obtain some finiteness conditions for the group $G$ in terms of the torsion elements in the non-abelian tensor square. Specifically, our proofs involve looking at the description of the diagonal subgroup $\Delta(G) \leqslant [G,G^{\varphi}]$. Such a description has previously been used by the authors \cite{BNRa,NR2}. 

It is well known that the finiteness of the non-abelian tensor square $[G,G^{\varphi}]$ does not imply the finiteness of the group $G$ (see Remark \ref{rem.prufer}, above). In \cite{PN}, Parvizi and Niroomand prove that if $G$ is a finitely generated subgroup and the non-abelian tensor square $[G,G^{\varphi}]$ is finite, then $G$ is finite. Later, in \cite{BNRa}, the authors prove that if $G$ is a finitely generated locally graded group and the exponent of the non-abelian tensor square $\exp([G,G^{\varphi}])$ is finite, then $G$ is finite. The next result can be viewed as a generalization of the above result. 

\begin{thmC} \label{thm.torsion}
Let $G$ be a finitely generated group. The following properties are equivalents. 
\begin{itemize}
\item[(a)] The group $G$ is finite;  
\item[(b)] The set of tensors $\tensor(G)$ is finite;
\item[(c)]  The non-abelian tensor $[G,G^{\varphi}]$ is finite; 
\item[(d)] The derived subgroup $G'$ is locally finite and the kernel $J_2(G) \cong \pi_3(SK(G,1))$ is periodic;
\item[(e)] The derived subgroup $G'$ is locally finite and the Diagonal subgroup $\Delta(G)$ is periodic;

\item[(f)] The derived subgroup $G'$ is locally finite and the  subgroup $\tilde{\Delta}(G)$ is periodic;

\item[(g)] The non-abelian tensor square $[G,G^{\varphi}]$ is locally finite.    
\end{itemize}
\end{thmC}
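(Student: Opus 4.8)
The plan is to prove the seven conditions equivalent by routing everything through two hubs: the finiteness statements (a)--(c), which I treat together, and the condition (e), to which the ``locally finite plus periodic'' statements (d), (f) and the local finiteness statement (g) will be reduced, closing the loop with the implication (e)$\Rightarrow$(a). Since (a) trivially implies each of (b)--(g) --- a finite group has finite, hence periodic and locally finite, tensor square, diagonal subgroup and $J_2$ --- it suffices to establish (b)$\Rightarrow$(c)$\Rightarrow$(a), the reductions (d),(f),(g)$\Rightarrow$(e), and the closing implication (e)$\Rightarrow$(a).

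For the finiteness core the inclusion $\tensor(G)\subseteq[G,G^{\varphi}]$ gives (c)$\Rightarrow$(b) at once, while (b)$\Rightarrow$(c) is exactly the content of \cite[Theorem B]{BNRb} applied with $G=H$: finiteness of the set of tensors forces finiteness of the tensor square it generates. The implication (c)$\Rightarrow$(a) is the theorem of Parvizi and Niroomand \cite{PN}, which uses finite generation of $G$ in an essential way. This already yields the equivalence of (a), (b), (c).

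The heart of the argument is the reduction of (d), (f), (g) to (e). Writing $\kappa\colon[G,G^{\varphi}]\to G'$ for the derived map, one checks $\Delta(G)\subseteq J_2(G)=\ker\kappa$ and $\tilde\Delta(G)\subseteq\ker\kappa$, since $\kappa([g,g^{\varphi}])=[g,g]=1$ and $\kappa([g,h^{\varphi}][h,g^{\varphi}])=[g,h][h,g]=1$. Thus (d)$\Rightarrow$(e) is immediate, a periodic $J_2(G)$ having periodic subgroup $\Delta(G)$. For (g)$\Rightarrow$(e) I would note that a subgroup of a locally finite group is locally finite, so $\Delta(G)$ is periodic, and that $G'=\kappa([G,G^{\varphi}])$ is a homomorphic image of a locally finite group, hence locally finite. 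The delicate reduction is (f)$\Rightarrow$(e): taking $h=g$ in the generators of $\tilde\Delta(G)$ shows $[g,g^{\varphi}]^{2}\in\tilde\Delta(G)$, so periodicity of $\tilde\Delta(G)$ forces every diagonal generator $[g,g^{\varphi}]$ to have finite order; since $\Delta(G)$ is abelian (being a homomorphic image of Whitehead's quadratic functor $\Gamma(G^{ab})$, cf. \cite{NR2,BNRa}) and an abelian group generated by finite-order elements is periodic, we obtain (e).

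It remains to prove (e)$\Rightarrow$(a), which I expect to be the main obstacle. Because $G$ is finitely generated, $G^{ab}$ is a finitely generated abelian group, and by \cite[Theorem C (a)]{BNRa} it embeds into the diagonal subgroup $\Delta(G)$. If $\Delta(G)$ is periodic, this embedding forces $G^{ab}$ to be periodic, whence $G^{ab}$ is finite and $G'$ has finite index in $G$. Since a subgroup of finite index in a finitely generated group is again finitely generated, $G'$ is finitely generated; being also locally finite, it is finite. Consequently $G$ is finite, closing the cycle. The crux of the whole proof is thus the interplay between the embedding $G^{ab}\hookrightarrow\Delta(G)$ and the passage from ``$G'$ locally finite and finitely generated'' to ``$G'$ finite''; the structural inputs that $\Delta(G)$ is abelian and that it contains a copy of $G^{ab}$ are precisely what make the mere periodicity hypotheses in (d)--(f) strong enough to recover full finiteness.
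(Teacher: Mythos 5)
Your proof is correct, but it is organized differently from the paper's. The paper proves the single cycle $(a)\Rightarrow(b)\Rightarrow(c)\Rightarrow(d)\Rightarrow(e)\Rightarrow(f)\Rightarrow(g)\Rightarrow(a)$, concentrating the work in two places: $(f)\Rightarrow(g)$, where Schmidt's theorem \cite[14.3.1]{Rob} reduces matters to finiteness of $G^{ab}=T\times F$ and an infinite-order element $[x,x^{\varphi}]^2\in\tilde{\Delta}(G)$ is produced from any infinite-order $xG'\in F$; and $(g)\Rightarrow(a)$, where the epimorphism $\Delta(G)\twoheadrightarrow\Delta(G^{ab})\cong\Delta(T)\times\Delta(F)\times(T\otimes_{\mathbb{Z}}F)$ from \cite[Remark 5]{NR2} kills the free part. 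You instead use a hub-and-spoke design: $(a)\Leftrightarrow(b)\Leftrightarrow(c)$ as a finiteness core (outsourcing $(c)\Rightarrow(a)$ to Parvizi--Niroomand \cite{PN}, which the paper's chain in effect reproves rather than cites), and $(d),(f),(g)$ all funneled into $(e)$, with the closing step $(e)\Rightarrow(a)$ resting on the embedding $G^{ab}\hookrightarrow\Delta(G)$ for finitely generated $G^{ab}$ \cite[Theorem C(a)]{BNRa}. The two routes lean on the same structural facts about the diagonal subgroup --- your $(f)\Rightarrow(e)$ via $[g,g^{\varphi}]^2\in\tilde{\Delta}(G)$ and the abelianness of $\Delta(G)$ (image of Whitehead's $\Gamma(G^{ab})$) is the same germ as the paper's infinite-order-element argument, and your use of the embedding is morally equivalent to the paper's analysis of $\Delta(G^{ab})$ --- but your decomposition makes transparent that periodicity of $\Delta(G)$ is the real pivot and avoids Schmidt's theorem, at the price of invoking \cite{PN} as a black box where the paper is self-contained. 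Both arguments are complete; one could splice your $(f)\Rightarrow(e)\Rightarrow(a)$ into the paper's cycle without loss.
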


\begin{proof}
$(a) \Rightarrow (b)$ and $(c) \Rightarrow (d) \Rightarrow (e) \Rightarrow (f)$ are directly. \\ 

$(b) \Rightarrow (c).$  Suppose that $\tensor(G)$. By \cite[Theorem A]{BNRa}, the non-abelian tensor square $[G,G^{\varphi}]$ is finite. \\ 

$(f) \Rightarrow (g)$. By Schmidt's Theorem \cite[14.3.1]{Rob}, it suffices to prove that the abelianization $G^{ab}$ is finite. 

Since $G^{ab}$ is finitely generated, we deduce that $$G^{ab} = T \times F,$$ where $T$ is the torsion part and $F$ the free part of $G^{ab}$ (cf. \cite[4.2.10]{Rob}). If the abelianization $G^{ab}$ is not periodic, then there exists an element of infinite order $x \in G$ such that $xG^{ab} \in F$. In particular, $[x,x^{\varphi}][x,x^{\varphi}] = [x,x^{\varphi}]^2$ is an infinite element in $\tilde{\Delta}(G)$. Consequently, $F$ is trivial and $G^{ab} = T$ is finite. \\

$(g) \Rightarrow (a).$ First we prove that the abelianization $G^{ab}$ is finite. Arguing as in the above paragraph, we deduce that $G^{ab} = T \times F,$ where $T$ is the torsion part and $F$ the free part of $G^{ab}$. From \cite[Remark 5]{NR2} we conclude that  $\Delta(G^{ab})$ is isomorphic to $$\Delta(T) \times \Delta(F) \times (T \otimes_{\mathbb{Z}} F),$$ where $T \otimes_{\mathbb{Z}} F$ is the usual tensor product of $\mathbb{Z}$-modules. In particular, the free part of $\Delta(G^{ab})$ is precisely $\Delta(F)$. Now, the canonical projection $G \twoheadrightarrow{G^{ab}}$ induces an epimorphism $q: \Delta(G) \twoheadrightarrow {\Delta(G^{ab})}$. Since $\Delta(G)$ is locally finite, it follows that $\Delta(G^{ab})$ is also locally finite. Consequently, $F$ is trivial and thus $G^{ab}$ is periodic and, consequently, finite.

It remains to prove that the derived subgroup $G'$ is finite. Since $G$ is finitely generated and $G^{ab}$ is finite, it follows that the derived subgroup $G'$ is finitely generated (\cite[1.6.11]{Rob}). As $G'$ is an homomorphic image of the non-abelian tensor square $[G,G^{\varphi}]$, we have $G'$ is finite. From this we deduce that $G$ is finite. The proof is complete.  
\end{proof}

\begin{prop}\label{prop_poly}
Let $G$ be a polycyclic-by-finite group. Suppose that the non-abelian tensor $[G,G^{\varphi}]$ is periodic. Then $G$ is finite.   
\end{prop}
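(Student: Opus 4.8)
The plan is to reduce the periodicity hypothesis on the non-abelian tensor square to the finiteness hypothesis of Theorem C, so that the stronger structural constraints available for polycyclic-by-finite groups can be brought to bear. First I would recall that a polycyclic-by-finite group is finitely generated, so by Theorem C it suffices to prove that the non-abelian tensor square $[G,G^{\varphi}]$ is finite (condition (c)); equivalently, by item (g), it suffices to show $[G,G^{\varphi}]$ is locally finite, and since it is a periodic group by hypothesis, local finiteness would follow if we knew the tensor square were itself polycyclic-by-finite or otherwise subject to a structural dichotomy between periodic and finite.

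The key structural input I would use is that the class of polycyclic-by-finite groups is closed under the relevant constructions: in particular $\eta(G,G)=\nu(G)$, and hence its subgroup $[G,G^{\varphi}]$, inherits good finiteness properties when $G$ is polycyclic-by-finite. Concretely, I would argue that $\nu(G)$ is polycyclic-by-finite whenever $G$ is, using the semidirect-product decomposition $\eta(G,G)=([G,G^{\varphi}]\cdot G)\cdot G^{\varphi}$ recalled in the introduction together with the fact that $[G,G^{\varphi}]$ is a finitely generated module-like object over $G$ (this is the point where I would invoke that the tensor square of a polycyclic-by-finite group is again polycyclic-by-finite, a known finiteness result for the non-abelian tensor square in this class). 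Once $[G,G^{\varphi}]$ is known to be polycyclic-by-finite, the decisive observation is elementary: a polycyclic-by-finite group that is periodic is necessarily finite, since every polycyclic-by-finite group is virtually polycyclic, a polycyclic periodic group is finite, and finiteness passes through the finite-index extension.

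Having established that $[G,G^{\varphi}]$ is finite, I would then simply quote Theorem C, specifically the implication $(c)\Rightarrow(a)$, to conclude that $G$ is finite. The main obstacle I anticipate is the verification that $[G,G^{\varphi}]$ is polycyclic-by-finite, i.e. the transfer of the polycyclic-by-finite property from $G$ to its non-abelian tensor square; this is precisely the step where one must control the size of the tensor square and cannot merely cite Theorem C, since Theorem C presupposes the finiteness we are trying to derive. I expect this to rest on the known fact (from the structural literature on $\nu(G)$) that $\nu(G)$ is polycyclic-by-finite when $G$ is, so that all of its subgroups, including $[G,G^{\varphi}]$, are polycyclic-by-finite as well. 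If that citation is available, the remainder of the argument is the short periodic-implies-finite observation followed by an appeal to Theorem C, and the proof is complete.
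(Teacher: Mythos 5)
Your proposal is correct in outline, but it takes a genuinely different route from the paper, and it hinges on an external structural input that the paper deliberately avoids. The paper's proof is short and self-contained: since the derived map $\kappa\colon [G,G^{\varphi}]\to G'$ is an epimorphism, $G'$ is periodic; being a subgroup of a polycyclic-by-finite group, $G'$ is itself polycyclic-by-finite, hence finite. Then, writing $G^{ab}=T\times F$ and reusing the argument from the proof of Theorem~C (an element $x$ of infinite order modulo $G'$ lying over the free part $F$ forces $[x,x^{\varphi}]^2$ to have infinite order in $\tilde{\Delta}(G)\leqslant [G,G^{\varphi}]$, contradicting periodicity), one gets $F=1$, so $G^{ab}$ is finite and hence $G$ is finite. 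You instead propose to show that $[G,G^{\varphi}]$ is itself polycyclic-by-finite, observe that a periodic polycyclic-by-finite group is finite, and then invoke Theorem~C, $(c)\Rightarrow(a)$. This works, and the lemma you need is true and available in the closure-properties literature (e.g.\ it follows from the central extension $1\to J_2(G)\to [G,G^{\varphi}]\to G'\to 1$ together with the finite generation of $\Gamma(G^{ab})$ and $H_2(G)$ for polycyclic-by-finite $G$), but you correctly identify it as the load-bearing step and leave it as a citation rather than a proof; it is a strictly heavier input than anything the paper uses. What each approach buys: yours is conceptually cleaner (reduce ``periodic'' to ``finite'' inside a class where the two coincide, then quote Theorem~C wholesale), while the paper's stays elementary, needs only the epimorphism onto $G'$ and the $T\times F$ argument already established in Theorem~C, and so does not require knowing anything about the global structure of $\nu(G)$ or $[G,G^{\varphi}]$ for polycyclic-by-finite $G$. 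If you keep your route, you must supply or precisely cite the claim that the non-abelian tensor square of a polycyclic-by-finite group is polycyclic-by-finite; without that, the argument has a gap exactly where you said it would.
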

\begin{proof}
Since the derived subgroup $G'$ is an epimorphic image of the non-abelian tensor square $[G,G^{\varphi}]$, it follows that $G'$ is also periodic. In particular, we can deduce that $G'$ is finite. Now, arguing as in the proof of Theorem \ref{thm.torsion}, we deduce that $G^{ab}$ is finite. The proof is complete. 
\end{proof}

It is well known that if $G$ is a group with exponent $\exp(G)\in \{2,3,4,6\}$, then $G$ is locally finite (Levi-van der Waerden, Sanov, see \cite[Section 14.2]{Rob} for more details). We also examine the finiteness of the group $G$, when the non-abelian tensor square $[G,G^{\varphi}]$ has small exponent. 

\begin{cor}\label{cor_Burnside}
Let $n \in \{2,3,4,6\}$ and $G$ a finitely generated group. Assume that the exponent of the non-abelian tensor square $\exp([G,G^{\varphi}])$ is exactly $n$. Then $G$ is finite.  
\end{cor}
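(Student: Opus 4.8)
The plan is to reduce the statement directly to part (g) of Theorem \ref{thm.torsion}. First I would observe that the hypothesis $\exp([G,G^{\varphi}]) = n$ with $n \in \{2,3,4,6\}$ says precisely that the non-abelian tensor square is a group of exponent $n$ lying in this restricted range. By the classical local finiteness results for groups of exponent $2$, $3$, $4$ or $6$ (the trivial case $n=2$, Levi--van der Waerden for $n=3$, Sanov for $n=4$, and Hall for $n=6$; see \cite[Section 14.2]{Rob}), every such group is locally finite. Hence $[G,G^{\varphi}]$ is locally finite.

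Next, since $G$ is finitely generated by hypothesis, I am exactly in the setting of Theorem \ref{thm.torsion}, whose condition (g) is that the non-abelian tensor square $[G,G^{\varphi}]$ is locally finite. Invoking the implication $(g) \Rightarrow (a)$ of that theorem immediately yields that $G$ is finite, which completes the argument.

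I do not expect any genuine obstacle here: the entire content is carried by the Burnside-type local finiteness theorems for the listed exponents, combined with the equivalence already established in Theorem \ref{thm.torsion}. The only point requiring attention is that it is \emph{local finiteness}—rather than mere finiteness of the exponent—that feeds into Theorem \ref{thm.torsion}(g); this is precisely what the classical results supply for $n \in \{2,3,4,6\}$, and is the reason the exponent must be restricted to this set rather than allowed to be an arbitrary integer.
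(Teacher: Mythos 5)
Your proposal is correct and follows exactly the paper's own argument: the classical local finiteness theorems for exponents $2,3,4,6$ show that $[G,G^{\varphi}]$ is locally finite, and then the implication $(g) \Rightarrow (a)$ of Theorem C finishes the proof. No difference in approach.
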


\begin{proof}
It suffices to see that the non-abelian tensor square $[G,G^{\varphi}]$ is locally finite (see \cite[Section 14.2]{Rob} for more details). By Theorem \ref{thm.torsion}, the group $G$ is finite. The proof is complete. 
\end{proof}

The following corollary is a topological version of Theorem C.

\begin{cor}\label{cor.torsion}
Let $G$ be a finitely generated group and $X$ be a topological space such that $\pi_1(X)=G$ and $\pi_2(X)$ is trivial. Then the following properties are equivalents.

\begin{itemize}
    \item[(a)] The group $\pi_1(X)=G$ is finite;  
\item[(b)] The derived subgroup $G'$ is locally finite and $\pi_3(SX)$ is periodic;
\item[(c)] The derived subgroup $G'$ is locally finite and  $ F(\pi_3(SX)) \cong F(H_2(G))$, where $F(H)$ is the free part of a group $H$;

\item[(d)] The derived subgroup $G'$ is locally finite and $F(\pi_3(SX)) \cong F(\pi_4(S^2X))$, where $F(H)$ is the free part of a group $H$.
\end{itemize}
\end{cor}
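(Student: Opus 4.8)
The plan is to derive Corollary~\ref{cor.torsion} as a topological reformulation of Theorem~\ref{thm.torsion}, exploiting the hypothesis $\pi_2(X)=0$ to replace the purely group-theoretic objects of Theorem C by their homotopy-theoretic counterparts. The key identification is the one recorded in Remark~\ref{rem.suspension}: when $\pi_1(X)\cong G$ and $\pi_2(X)$ is trivial, Brown--Loday's result gives $\pi_3(SX)\cong J_2(G)=\ker\kappa$, where $\kappa:[G,G^{\varphi}]\to G'$. Thus $\pi_3(SX)$ in the topological statement is literally the kernel $J_2(G)$ appearing in Theorem~\ref{thm.torsion}(d), and conditions (a) and (b) of the corollary correspond exactly to conditions (a) and (d) of Theorem C. So the equivalence $(a)\Leftrightarrow(b)$ is immediate from Theorem~\ref{thm.torsion}.

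First I would establish $(a)\Leftrightarrow(b)$ by the substitution just described, citing Theorem~\ref{thm.torsion}$(a)\Leftrightarrow(d)$ together with the isomorphism $J_2(G)\cong\pi_3(SX)$ from Remark~\ref{rem.suspension}. Next I would handle (c). Here I would invoke the short exact sequence of Remark~\ref{rem.prufer}(a),
\[
1\to\Delta(G)\to J_2(G)\to H_2(G)\to 1,
\]
with $J_2(G)\cong\pi_3(SX)$. Assuming $G'$ is locally finite, I would argue that $\Delta(G)$ is torsion precisely when it is locally finite (locally finite abelian-by-the-relevant-structure), and then read off from the exact sequence that the free part of $\pi_3(SX)$ coincides with the free part of $H_2(G)$ exactly when $\Delta(G)$ is periodic. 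The point is that a torsion $\Delta(G)$ injects into the torsion part of $J_2(G)$, forcing $F(\pi_3(SX))\cong F(H_2(G))$; conversely, equality of free parts forces $\Delta(G)$ to contribute no free part, hence (given $G'$ locally finite) to be periodic, which is condition (e) of Theorem C. Thus $(c)$ is equivalent to Theorem~\ref{thm.torsion}(e), and the chain closes back to (a).

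For (d) I would use the stable/unstable comparison for the second suspension. The space $S^2X$ is the double suspension, and the Freudenthal-type relationship together with Brown--Loday's computation (as in the passage preceding Corollary~\ref{cor_SKpi}, relying on \cite[Proposition 4.10]{BL}) identifies the free part of $\pi_4(S^2X)$ with the free part of the relevant stable invariant. Concretely, I would show $F(\pi_4(S^2X))\cong F(H_2(G))$ holds unconditionally (or under $G'$ locally finite), so that the condition $F(\pi_3(SX))\cong F(\pi_4(S^2X))$ becomes $F(\pi_3(SX))\cong F(H_2(G))$, i.e.\ condition (c). This reduces (d) to (c) and completes the cycle of equivalences.

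The main obstacle I anticipate is the bookkeeping of free parts in the exact sequence $1\to\Delta(G)\to J_2(G)\to H_2(G)\to 1$. Since these groups need not be finitely generated, the notion of ``free part'' must be handled with care: I would want to argue that under local finiteness of $G'$ the group $H_2(G)$ and the torsion-related subgroups behave well enough that the sequence splits off free ranks additively, so that $F(J_2(G))\cong F(\Delta(G))\oplus F(H_2(G))$ in a way that makes the free part of $\Delta(G)$ the sole obstruction to $F(\pi_3(SX))\cong F(H_2(G))$. Justifying that the free part of an abelian group is well-defined and additive along this sequence (for instance by passing to $\otimes\mathbb{Q}$ to linearize the rank computation) is the delicate step; once the rank count is clean, the equivalences $(c)\Leftrightarrow(e)$ and $(d)\Leftrightarrow(c)$ follow formally and the corollary reduces entirely to Theorem~\ref{thm.torsion}.
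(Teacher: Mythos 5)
Your proposal matches the paper's proof in all essentials: the paper likewise identifies $\pi_3(SX)$ with $J_2(G)$ via Brown--Loday (using $\pi_2(X)=0$), invokes the short exact sequences $1\to\Delta(G)\to J_2(G)\to H_2(G)\to 1$ and $1\to\tilde{\Delta}(G)\to J_2(G)\to\pi_4(S^2X)\to 1$, and reduces the whole statement to Theorem C. The only (minor and harmless) deviation is in item (d), where the paper matches the condition directly with Theorem C(f) through the $\tilde{\Delta}(G)$-sequence, whereas you reduce (d) to (c) by asserting $F(\pi_4(S^2X))\cong F(H_2(G))$ --- which is fine, since $\Delta(G)/\tilde{\Delta}(G)$ is torsion, so the two quotients of $J_2(G)$ have the same free part.
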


\begin{proof} Since $\pi_2(X)$ is trivial, it follows that $J_2(G)\cong \pi_3(SX)$ (cf. \cite[Proposition 3.3]{BL}). Consider the short exact sequences, as in \cite{BL}, $$1 \rightarrow J_2(G) \rightarrow [G,G^{\varphi}] \rightarrow G'\rightarrow 1,$$
$$1 \rightarrow \tilde{\Delta}(G) \rightarrow J_2(G) \rightarrow \pi_4(S^2X)\rightarrow 1.$$

Therefore, the result follows by applying Theorem C.
\end{proof}

\section{Application to homotopy pushout}

We end this paper by proving some finiteness criteria for the homotopy pushout. Consider the following commutative square of spaces 	$$\xymatrix{ C \ar[d]^g \ar[r]^f & A \ar[d]^a \\ B \ar[r]^b  & X 
	}$$ 
and denote $F(f), F(g)$ and $F(a)$ the homotopy fibre of $f, g$ and $a$, respectively, and let $F(X)$ be the homotopy fibre of $F(g)\rightarrow F(a)$. The previous square is called a {\it homotopy pushout} when the canonical map of squares from the double mapping cylinder $\mathrm{M}(f,g)$ to $X$ is a weak equivalence of spaces at the four corners, for more details see \cite{BL}. For the homotopy pushout, we have an analogous of Theorem A. 

\begin{prop}\label{homotopy_sum}
Let the following square of spaces be a homotopy pushout $$\xymatrix{ C \ar[d]^g \ar[r]^f & A \ar[d]^a \\ B \ar[r]^b  & X 
}$$
Suppose that  $\pi_3(A)$, $\pi_2(F(g))$ and the set of tensors $\tensor(G,H)$ are finite, where $G = \pi_1(F(f))$ and $H = \pi_1(F(g))$. Then $\pi_3(X)$ is a finite group with $\{n_a,n_b,m\}$-bounded order, where $|\pi_3(A)|=n_a$, $|\pi_2F(g)|=n_b$ and $|\tensor(G,H)|=m$.    
\end{prop}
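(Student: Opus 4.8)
\textbf{Proof plan for Proposition \ref{homotopy_sum}.}

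The plan is to mimic exactly the strategy used in the proof of Theorem A, replacing the relative homotopy exact sequences of the triad $(X,A,B)$ with the corresponding fibration sequences coming from the homotopy pushout. The key structural input is that for a homotopy pushout, Brown and Loday's results (see \cite{BL}) identify $\pi_2(F(X))$—the second homotopy group of the homotopy fibre of $F(g)\to F(a)$—with the non-abelian tensor product $G\otimes H$, where $G=\pi_1(F(f))$ and $H=\pi_1(F(g))$. This is the precise analogue, in the homotopy-pushout setting, of the Blakers--Massey identification $\pi_{p+q+1}(X,A,B)\cong \pi_{p+1}(A,C)\otimes\pi_{q+1}(B,C)$ that drives Theorem A.

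First I would assemble the relevant long exact sequences of homotopy groups. From the homotopy fibration $F(a)\to A\to X$ one obtains
$$\pi_3(A)\to \pi_3(X)\to \pi_2(F(a))\to \pi_2(A),$$
so that controlling $\pi_3(X)$ reduces (up to the finite group $\pi_3(A)$ of order $n_a$) to controlling $\pi_2(F(a))$. Next, from the fibration $F(X)\to F(g)\to F(a)$ one gets
$$\pi_2(F(g))\to \pi_2(F(a))\to \pi_1(F(X))\to \pi_1(F(g)),$$
so that $\pi_2(F(a))$ is squeezed between $\pi_2(F(g))$ (finite of order $n_b$) and $\pi_1(F(X))$. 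The remaining task is therefore to bound $\pi_1(F(X))$, and here I would use the definition of $F(X)$ as the homotopy fibre of $F(g)\to F(a)$ together with the identification $\pi_2(F(X))\cong G\otimes H\cong [G,H^{\varphi}]$, invoking \cite[Theorem B]{BNRb} exactly as in Theorem A: since the set of tensors $\tensor(G,H)$ has exactly $m$ elements, the tensor product $[G,H^{\varphi}]$ is finite with $m$-bounded order. Threading this finiteness back up through the two exact sequences yields that $\pi_3(X)$ is finite with $\{n_a,n_b,m\}$-bounded order.

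The main obstacle I anticipate is purely bookkeeping at the level of the fibration sequences: one must be careful about which homotopy fibre carries the tensor product and at which degree. Specifically, the delicate point is verifying that it is $\pi_1(F(X))$ (equivalently a term adjacent to the tensor-product term $\pi_2(F(X))\cong G\otimes H$) that feeds into $\pi_2(F(a))$, and that all the connecting maps cooperate so that finiteness propagates. Once the correct fibration ladder is written down and the Brown--Loday identification of the tensor product is inserted, the order estimate is a routine multiplicative consequence of exactness—each of the three finite groups of orders $n_a$, $n_b$, and (the $m$-bounded) $|[G,H^{\varphi}]|$ contributes a bounded factor—so the essential work is in setting up the exact sequences correctly rather than in the estimation itself. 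This completes the plan.
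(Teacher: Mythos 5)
Your plan follows the paper's proof almost exactly: the paper also uses the long exact sequences of the two fibrations $F(X)\rightarrow F(g)\rightarrow F(a)$ and $F(a)\rightarrow A\rightarrow X$, bounds the relevant homotopy group of $F(X)$ by the number of tensors via \cite[Theorem B]{BNRb}, and threads the finiteness back to $\pi_3(X)$. Your fibration ladder and the two exact sequences
$\pi_3(A)\to\pi_3(X)\to\pi_2(F(a))\to\pi_2(A)$ and
$\pi_2(F(g))\to\pi_2(F(a))\to\pi_1(F(X))$ are set up correctly.

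There is, however, one substantive slip, and it sits precisely at the point you flagged as delicate. You state the Brown--Loday identification as $\pi_2(F(X))\cong G\otimes H$. The correct statement --- and the one the paper invokes, namely \cite[Theorem 3.1]{BL} --- is $\pi_1(F(X))\cong \pi_1(F(f))\otimes\pi_1(F(g)) = G\otimes H$. This matters: your own exact sequence shows that the group feeding into $\pi_2(F(a))$ is $\pi_1(F(X))$, and finiteness of $\pi_2(F(X))$ gives no control whatsoever over $\pi_1(F(X))$, so with the identification as you wrote it the argument does not close. With the degree corrected, $\pi_1(F(X))\cong[G,H^{\varphi}]$ is finite with $m$-bounded order by \cite[Theorem B]{BNRb}, and the rest of your bookkeeping goes through verbatim, yielding the $\{n_a,n_b,m\}$-bound. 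So the gap is real as written, but it is repaired by citing the Brown--Loday theorem in the correct degree rather than by any change of strategy.
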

\begin{proof}
By \cite[Theorem 3.1]{BL} we have that $\pi_1(F(X))\simeq \pi_1(F(f))\otimes \pi_1(F(g))$. Since $|T_{\otimes}(\pi_1(F(f)),\pi_1(F(g)))|=m$ then  $\pi_1(F(X))$ is finite with $m$-bounded order (\cite[Theorem B]{BNRb}). By using $|\pi_3(A)|=n_a$, $|\pi_2F(g)|=n_b$ and the long exact sequences of the fibrations $$F(X)\rightarrow F(g)\rightarrow F(a)$$ $$F(a)\rightarrow A\rightarrow X,$$ it follows that  $\pi_3(X)$ is a finite group with $\{n_a,n_b,m\}$-bounded.
\end{proof}

Many authors had studied some finiteness conditions for the non-abelian tensor product of groups (cf. \cite{BNRa,BR1,DLT,Ellis,M,PN}). For instance, in \cite{M}, Moravec proved that if $G,H$ are locally finite groups acting compatibly on each other, then so is $G \otimes H$. In \cite{DLT}, Donadze, Ladra and Thomas proved interesting finiteness criteria for non-abelian tensor product in terms of the involved groups. In \cite{BNRa,BNRb}, the authors prove a finiteness criterion for the non-abelian tensor product of groups in terms of the number of tensors. In the case of homotopy pushout for Eilenberg-Maclane spaces is possible to obtain results direct from the study of the non-abelian tensor product of groups. We obtain the following related result. 

\begin{prop}\label{prop_GMN} Let $M,N$ be normal subgroups of a group $G$, and form the homotopy pushout
	
$$\xymatrix{ K(G,1) \ar[d] \ar[r] & K(G/N,1) \ar[d] \\ K(G/M,1) \ar[r]  & X 
}$$
\begin{itemize}
\item[(a)] Suppose that  the subgroups $M$ and $N$ are locally finite. Then $\pi_2(X)$ and $\pi_3(X)$ are locally finite.  
\item[(b)] Suppose that $M$ is a non-abelian free group of finite rank and $N$ is a finite group. Then $\pi_2(X)$ and $\pi_3(X)$ are finite.  
\item[(c)] 
Suppose that the set of tensors $\tensor(M,N)  \subseteq \eta(M,N)$ is finite. Then $\pi_3(X)$ is a finite group with $m$-bounded order, where $|\tensor(M,N)|=m$.  
\end{itemize}	
\end{prop}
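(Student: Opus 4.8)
The plan is to exploit the fundamental theorem of Brown and Loday that identifies the relevant homotopy groups of the homotopy pushout $X$ with non-abelian tensor products, and then to feed these identifications into the finiteness criterion \cite[Theorem B]{BNRb} together with the long exact sequences of the associated fibrations. Since $M$ and $N$ are normal in $G$, the homotopy fibres of the maps $K(G,1)\to K(G/M,1)$ and $K(G,1)\to K(G/N,1)$ are the Eilenberg-MacLane spaces $K(M,1)$ and $K(N,1)$ respectively; in particular their fundamental groups are $M$ and $N$. By the Brown-Loday description \cite[Theorem 3.1]{BL}, the fundamental group of the homotopy fibre $F(X)$ is then $\pi_1(F(X))\cong M\otimes N$, realized inside $\eta(M,N)$ as $[M,N^{\vfi}]$, and the higher homotopy groups $\pi_2(X)$ and $\pi_3(X)$ are controlled by $F(X)$ through the long exact sequences of the fibrations $F(X)\to F(g)\to F(a)$ and $F(a)\to A\to X$.

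For part (c) I would proceed exactly parallel to the proof of Proposition \ref{homotopy_sum}. First, the hypothesis $|\tensor(M,N)|=m$ forces the non-abelian tensor product $M\otimes N\cong [M,N^{\vfi}]$ to be finite with $m$-bounded order, by \cite[Theorem B]{BNRb}. This gives finiteness of $\pi_1(F(X))$ with $m$-bounded order. Then, feeding this into the long exact homotopy sequences of the two fibrations above (using that the fibres here are $K(M,1)$ and $K(N,1)$, whose homotopy above degree one vanishes), the relevant contributions to $\pi_3(X)$ reduce to the tensor product $\pi_1(F(X))$, yielding that $\pi_3(X)$ is finite with $m$-bounded order.

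For parts (a) and (b) the strategy is to replace the cardinality input by the appropriate qualitative finiteness property and invoke a suitable closure theorem for the non-abelian tensor product. In part (a), since $M$ and $N$ are locally finite and act compatibly, Moravec's theorem (cited above, from \cite{M}) gives that $M\otimes N\cong \pi_1(F(X))$ is locally finite; the long exact sequences then propagate local finiteness to $\pi_2(X)$ and $\pi_3(X)$. In part (b), with $M$ free of finite rank and $N$ finite, I would instead appeal to the finiteness criterion of Donadze-Ladra-Thomas \cite{DLT} (or the relevant structural results for $M\otimes N$ when one factor is finite), concluding that $M\otimes N$ is finite and hence so are $\pi_2(X)$ and $\pi_3(X)$.

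The main obstacle I anticipate is the careful bookkeeping in the long exact sequences: one must verify that the only potentially infinite contributions to $\pi_2(X)$ and $\pi_3(X)$ genuinely factor through the tensor product $\pi_1(F(X))$ and the low-degree homotopy of the Eilenberg-MacLane fibres, and that no uncontrolled terms enter from $\pi_3$ of intermediate spaces. Since the fibres $K(M,1)$ and $K(N,1)$ have vanishing homotopy above degree one, this should force the exact sequences to collapse favourably, but one has to be precise about which groups appear in which degree and confirm that the bounds (or the local-finiteness, or finiteness) indeed transfer. The second subtlety is confirming that the compatible actions of $M$ and $N$ on one another—needed to even form $\eta(M,N)$ and apply the cited tensor-product theorems—are correctly induced by the normality of $M$ and $N$ in $G$; this is standard but should be stated.
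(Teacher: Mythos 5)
Your treatment of $\pi_3(X)$ is essentially sound and, modulo presentation, amounts to re-deriving the input that the paper simply cites: the paper invokes \cite[Corollary 3.4]{BL} directly, which says that $\pi_3(X)$ is (isomorphic to) a subgroup of $M\otimes N\cong[M,N^{\varphi}]$ and that $\pi_2(X)\cong (M\cap N)/[M,N]$, whereas you propose to recover the first of these facts from \cite[Theorem 3.1]{BL} via the fibrations $F(X)\to F(g)\to F(a)$ and $F(a)\to A\to X$, using that the fibres $K(M,1)$, $K(N,1)$ have no homotopy above degree one. That chase does collapse as you expect ($\pi_3(X)\cong\pi_2(F(a))$ embeds in $\pi_1(F(X))\cong M\otimes N$), so parts (a), (b), (c) for $\pi_3(X)$ go through with Moravec's theorem, \cite[Corollary 4.7]{DLT}, and the $m$-bounded-order result of \cite{BNRb}, exactly as in the paper.

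There is, however, a genuine gap in your handling of $\pi_2(X)$ in parts (a) and (b). You conclude local finiteness (resp.\ finiteness) of $\pi_2(X)$ from the corresponding property of $M\otimes N\cong\pi_1(F(X))$, but $\pi_2(X)$ is \emph{not} controlled by the tensor product: the paper's own remark following this proposition gives the counterexample $M=N=C_{p^{\infty}}$, where $[M,N^{\varphi}]$ is trivial yet $\pi_2(X)\cong C_{p^{\infty}}$ is infinite. The correct mechanism is the other half of \cite[Corollary 3.4]{BL}, namely $\pi_2(X)\cong (M\cap N)/[M,N]$, so that $\pi_2(X)$ is a homomorphic image of $M\cap N$; in (a) this is locally finite because $M$ is, and in (b) it is finite because one of the two subgroups is finite. (In your fibration language: $\pi_2(X)$ embeds in $\pi_1(F(a))$, which is a \emph{quotient of} $\pi_1(F(g))=M$ since $F(X)$ is connected, so the relevant finiteness comes from $M$ and $N$ themselves, not from $F(X)$.) With that correction your argument aligns with the paper's; as written, the step ``$M\otimes N$ finite, hence $\pi_2(X)$ finite'' is false.
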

\begin{proof}

According to Brown and Loday's result \cite[Corollary 3.4]{BL}, the group $\pi_3(X)$ is isomorphic to a subgroup of the non-abelian tensor product $[M,N^{\varphi}]$ and 
$\pi_2(X)$ is isomorphic to $(M \cap N)/[M,N]$. In particular, the group $\pi_2(X)$ is an homomorphic image of $M\cap N$. \\

\noindent (a). As $M \cap N$ is locally finite we have $\pi_2(X)$ is locally finite. Now, since $M$ and $N$ are locally finite, it follows that the non-abelian tensor product $[M,N^{\varphi}]$ is locally finite (Moravec, \cite{M}). Consequently, the group $\pi_3(X)$ is locally finite.  \\

\noindent (b). Since $M$ is finite and $N$ is a non-abelian free group, we deduce that $\pi_2(X)$ is finite. According to Donadze, Ladra and Thomas' result \cite[Corollary 4.7]{DLT}, we conclude that the non-abelian tensor product $[M,N^{\varphi}]$ is finite and so, $\pi_3(X)$ is finite. \\

\noindent (c). According to Theorem A, the non-abelian tensor product $[M,N^{\varphi}]$ is finite with $m$-bounded order. In particular, the group $\pi_3(X)$ is finite with $m$-bounded order. The proof is complete.   
\end{proof}

\begin{rem}
Note that Proposition \ref{prop_GMN} (c) in a certain sense cannot be improved. For instance, if $M=N=C_{p^{\infty}}$, then the group $\pi_2(X)\cong C_{p^{\infty}}$ is infinite and $\pi_3(X)$ is trivial. 
\end{rem}

As an interesting consequence of 
Proposition \ref{prop_GMN}, we obtain that: if
$G=MN$ such that $M\cap N$ and $[M,N^{\varphi}]$ are trivial, then $X$ is 3-connected, i.e., $\pi_n(X)$ is trivial for $n=1,2$ and $3$, see the following example.

\begin{ex}
When $G=C_{r^\infty} \times C_{s^\infty}$ in Proposition \ref{prop_GMN} with $r$ and $s$ primes (not necessarily distinct), we have that $X$ is $3$-connected. In fact, $\pi_1(X)$ is trivial by Van Kampen theorem, as well $\pi_2(X)=0$ as a consequence of the Van Kampen theorem for maps, and finally the triviality of $\pi_3(X)$ follows from the proposition above. 
\end{ex}

\end{document}